\newtheorem{thm}{Theorem}[section]
\newtheorem{lem}[thm]{Lemma}
\theoremstyle{definition}
\theoremstyle{remark}
\numberwithin{equation}{section}
\def\HJ#1{\par\medskip\noindent{\bf#1.}\bgroup\it \ }
\def\EHJ{\egroup}
\newcommand{\NM}{\vartriangleleft}
\DeclareMathOperator{\Irr}{Irr}
\DeclareMathOperator{\Ker}{Ker}
\DeclareMathOperator{\GL}{GL}
\DeclareMathOperator{\cod}{cod}
\begin{document}

\title{Codegrees of primitive characters of solvable groups}

\author{Ping Jin, Lei Wang, Yong Yang*}

\address{School of Mathematical Sciences, Shanxi University, Taiyuan, 030006, China.}
\email{jinping@sxu.edu.cn; wanglei0115@163.com}

\address{Three Gorges Mathematical Research Center, China Three Gorges University, Yichang, Hubei 443002, China and Department of Mathematics, Texas State University, 601 University Drive, San Marcos, TX 78666, USA.}
\email{yang@txstate.edu}
\thanks{*Corresponding author}

\keywords{codegree, primitive character, element order, solvable group}

\date{}

\begin{abstract}
We obtain the codegree of a certain primitive character for a finite solvable group,
and thereby give a negative answer to a question proposed by Moret\'o in \cite{Moreto}.
\end{abstract}

\maketitle

\section{Introduction}

Let $G$ be a finite group, and write $\Irr(G)$ to denote the set of irreducible complex characters of $G$. The concept of character codegree, originally defined as $|G|/\chi(1)$ for any nonlinear irreducible character $\chi$ of $G$,
was introduced in \cite{ch} in order to characterize the structure of finite groups.
For a nonlinear character $\chi\in \mathrm{Irr}(G/N)$ where $N$ is a nontrivial normal subgroup of $G$, however, this character will have two different codegrees when it is considered as the character of $G$ and of $G/N$, respectively.
 To eliminate this inconvenience,
 Qian, Wang, and Wei in \cite{qww} redefined the codegree
 $\mathrm{cod}(\chi)=|G:\Ker \chi|/\chi(1)$ for an arbitrary character $\chi$ of $G$.
 Up to now many properties of codegrees have been studied,
 such as variations on Huppert's $\rho$-$\sigma$ conjecture,
 the relationship between the codegrees and the element orders, groups with few codegrees, and recognizing simple groups using the codegree set, etc.

 The authors believe that among the above-mentioned results, the most interesting finding is the relation between the codegrees and the element orders (see ~\cite{I11,Q11,Qian} for example).
 Here we mention a result of Qian (\cite[Theorem 1.1]{Q11}),
 which says that if a finite solvable group $G$ has an element $g$ of square-free order,
 then $G$ must have an irreducible character of codegree divisible by the order $o(g)$ of $g$.
 In a short time, Isaacs \cite{I11} established the same result for an arbitrary finite group. Recently Qian \cite{Qian} strengthened further his previous result, showing that for every element $g$ of a finite solvable group $G$, there exists necessarily some $\chi\in {\rm Irr}(G)$ such that $o(g)$ divides ${\rm cod}(\chi)$.

Motivated by the results in ~\cite{I11,Q11,Qian}, Moret\'o considered the converse relation of codegrees and element orders and proposed an interesting question ~\cite[Question B]{Moreto}.
He also mentioned that the counterexamples, if exist, seem to be rare.

\HJ{Question}
Let $G$ be a finite solvable group and let $\chi\in\Irr(G)$.
Does there exist $g\in G$ such that $\pi(\cod(\chi))\subseteq\pi(o(g))$?
Here $\pi(n)$ denotes the set of prime divisors of a positive integer $n$.
\EHJ

In this note we will construct examples to show that this question has a negative answer in general.
To do so, we first show that any faithful primitive character of a finite solvable group $G$
has codegree divisible by all prime divisors of $|G|$ (Theorem 2.2 below),
and then present a sufficient condition to guarantee that
a solvable group possess a faithful primitive character (Theorem 2.3 below).
These two results, which we think, perhaps are of independent interest.

Throughout this paper all groups considered are finite and characters are defined over the
complex field. For notation and terminology, we refer to \cite{I76}.

\section{Codegrees of primitive characters}
We begin by reviewing some facts from $\pi$-theory of characters.
If $G$ is a $\pi$-separable group, where $\pi$ is a set of primes,
a character $\chi \in \Irr(G)$ is said to be $\pi$-special
if $\chi(1)$ is a $\pi$-number and the determinantal order $o(\theta)$ is a $\pi$-number for every irreducible constituent $\theta$ of the restriction $\chi_S$ for every subnormal subgroup $S$ of $G$. This definition was first introduced by Gajendragadkar in \cite{G1979},
and plays a prominent role in the character theory of solvable groups.
For simplicity, we say that $\chi$ is $p$-special if the set $\pi$ consists of a single prime $p$.

\begin{lem}\label{sp}
Let $G$ be a solvable group and let $\chi\in\Irr(G)$ be a nonlinear character.

{\rm (1)} If $\chi$ is primitive, then $\chi=\prod \chi_p$,
where each $\chi_p$ is $p$-special and $p$ runs over $\pi(\chi(1))$.

{\rm (2)} If $\chi$ is $p$-special for a prime $p$, then $\chi_P$ is irreducible for any Sylow $p$-subgroup $P$ of $G$.
\end{lem}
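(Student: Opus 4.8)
The plan is to derive both parts from Gajendragadkar's theory of $\pi$-special characters \cite{G1979}, leaning on three standard inputs: (a) in a $\pi$-separable group the product of a $\pi$-special and a $\pi'$-special character is irreducible, this factorization is unique, and it refines compatibly (a $\sigma$-special factor of a $\rho$-special character is $(\sigma\cap\rho)$-special); (b) a $G$-invariant $p$-special character of a normal subgroup extends to a $p$-special character of $G$; and (c) quotients, complex conjugates, products, and Clifford correspondents of $p$-special characters remain $p$-special whenever they are irreducible.

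For part (1), I would first record the substantive input from \cite{G1979}: a primitive character of a solvable group is $\pi$-factored for every set of primes $\pi$, so we may write $\chi=\alpha\beta$ with $\alpha$ being $\pi$-special and $\beta$ being $\pi'$-special. The observation that keeps the induction running is that \emph{both factors inherit primitivity}: if $\alpha=\mu^G$ were induced from a proper subgroup $H$, then $\chi=(\mu\,\beta_H)^G$ would be induced as well, contradicting primitivity of $\chi$. I would then iterate over the prime divisors $p_1,\dots,p_k$ of $|G|$: split off a $p_1$-special factor to write $\chi=\chi_{p_1}\psi_1$ with $\psi_1$ being $p_1'$-special and primitive; split a $p_2$-special factor from $\psi_1$, invoking the refinement in (a) so that the residual factor stays $\{p_1,p_2\}'$-special; and continue. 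Once all primes dividing $|G|$ are exhausted, the residual factor is $\emptyset$-special, hence linear with trivial determinantal order and therefore trivial, so the accumulated product is the asserted $\chi=\prod_p\chi_p$ into $p$-special characters (those belonging to primes not dividing $\chi(1)$ being linear); uniqueness follows from the uniqueness in (a).

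For part (2) I would argue by induction on $|G|$, picking a minimal normal subgroup $N$, which by $p$-separability is either a $p'$-group or an elementary abelian $p$-group. If $N$ is a $p'$-group, then every irreducible constituent $\theta$ of $\chi_N$ is linear (its degree divides both the $p$-number $\chi(1)$ and the $p'$-number $|N|$) and has $p$-power determinantal order dividing $|N|$, hence $o(\theta)=1$ and $\theta=1_N$; thus $N\le\Ker\chi$, and applying the inductive hypothesis to the $p$-special character of $G/N$ together with $PN/N\cong P$ closes this case. If $N$ is a $p$-group, fix $\lambda\in\Irr(N)$ lying under $\chi$ and set $T=I_G(\lambda)$. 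When $T<G$, the Clifford correspondent $\psi\in\Irr(T\mid\lambda)$ is $p$-special, and since $\chi(1)=[G:T]\psi(1)$ is a $p$-number we get $[G:T]$ a $p$-number, whence $G=PT$ and $P_T:=P\cap T\in\Syl_p(T)$. Mackey's formula then gives $\chi_P=(\psi_{P_T})^P$; as $\psi_{P_T}$ is irreducible by induction and lies over $\lambda$, whose inertia group in $P$ is exactly $P_T$, the Clifford correspondence inside $P$ forces $\chi_P\in\Irr(P)$.

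I expect the genuine obstacle to be the remaining case of part (2), where $N$ is a $p$-group and $\lambda$ is $G$-invariant, since there is then no reduction to a proper subgroup. Here my plan is to invoke input (b): $\lambda$ extends to a $p$-special, necessarily linear, character $\hat\lambda$ of $G$, so Gallagher's theorem gives $\chi=\hat\lambda\,\beta_0$ with $\beta_0\in\Irr(G/N)$, and $\beta_0=\chi\,\overline{\hat\lambda}$ is again $p$-special by (c). Induction applied to $\beta_0$ on $G/N$ makes $(\beta_0)_{P/N}$ irreducible, and since multiplying by the linear $\hat\lambda_P$ preserves irreducibility we conclude $\chi_P=\hat\lambda_P\,(\beta_0)_P\in\Irr(P)$. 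This invariant-character case is the crux: it is precisely where the extension theorem for special characters, rather than mere Clifford reduction, becomes indispensable.
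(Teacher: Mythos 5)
The fatal problem is in part (2), at exactly the step you call the crux. Your input (b) --- ``a $G$-invariant $p$-special character of a normal subgroup extends to a $p$-special character of $G$'' --- is false as stated. Gajendragadkar's extension theorem requires $G/N$ to be a $p'$-group; with no hypothesis on $G/N$ there may be \emph{no} extension at all, $p$-special or otherwise. Concretely, take $G=Q_8$, $N=Z(G)$ of order $2$, and $\lambda$ the faithful linear character of $N$: then $\lambda$ is $2$-special and $G$-invariant, but every linear character of $G$ is trivial on $N$, since $N=G'$, so $\lambda$ does not extend and Gallagher's theorem never gets started. Your invariant case lands precisely in this configuration: minimality of $N$ together with $G$-invariance of a nontrivial $\lambda$ forces $\Ker\lambda=1$, hence $N\le Z(G)$ cyclic of order $p$ with $\lambda$ faithful, and nothing prevents $N\le G'$ (it happens for $Q_8$, for $\SL(2,3)$, and for the extraspecial-type groups that are the whole point of this paper). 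This central, fully ramified case is the actual content of the cited restriction theorem (Theorem 2.10 of Isaacs's book); the genuine proofs do not reduce via a minimal normal subgroup but via $O_{p'}(G)$ (which lies in the kernel of any $p$-special character) and $O_p(G)$, using the canonical extension only across $p'$-quotients, where it exists, together with $C_G(O_p(G))\le O_p(G)$. So part (2) does not close; your $p'$-case and your Clifford--Mackey case with $T<G$ are fine (granting the standard fact that Clifford correspondents of $p$-special characters are $p$-special), but the invariant case has to be redone with different machinery.

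Part (1) is essentially sound, and is in outline how the full factorization is usually deduced from the theorem that primitive characters of solvable groups are $\pi$-factored for every $\pi$: the push--pull identity $(\mu\,\beta_H)^G=\mu^G\beta$ does show both special factors inherit primitivity, and the iteration then works. Two caveats. First, your refinement property in (a) is a real lemma, not a formal consequence of uniqueness; it needs the argument that constituents of $\alpha_S$, $\beta_S$ on subnormal $S$ are special and that $o(\theta\eta)=o(\theta)\,o(\eta)$ for constituents of coprime special characters. Second, what your iteration actually yields --- a product over all primes dividing $|G|$, in which the factors at primes outside $\pi(\chi(1))$ are linear but need not be trivial --- is the most that can be true: the primitive degree-$2$ characters of $\SL(2,3)$ whose determinant has order $3$ require a nontrivial linear $3$-special factor, so the indexing over $\pi(\chi(1))$ in the statement as quoted cannot literally be achieved. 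That discrepancy is harmless for this paper (the proof of Theorem 2.2 uses only the degrees of the factors at primes dividing $\chi(1)$), but you should state the factorization in your corrected form rather than assert it equals the one claimed.
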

\begin{proof}
Part (1) is Theorem 2.17 of \cite{I18},
and (2) is a special case of Theorem 2.10 of the same reference.
\end{proof}

We can now establish the two results regarding the codegrees of primitive characters, as mentioned in the introduction.

\begin{thm}\label{pc}
Let $G$ be a solvable group,
and let $\chi\in\Irr(G)$ be a faithful primitive character.
Then $\cod(\chi)$ is divisible by each prime factor of $|G|$,
that is, $\pi(\cod(\chi))=\pi(|G|)$.
\end{thm}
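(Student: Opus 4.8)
The plan is to use faithfulness to rewrite the codegree and then to control the $p$-part of $\chi(1)$ one prime at a time. Since $\chi$ is faithful, $\Ker\chi = 1$, so $\cod(\chi) = |G|/\chi(1)$. Writing $n_p$ for the $p$-part of a positive integer $n$, the assertion $\pi(\cod(\chi)) = \pi(|G|)$ is equivalent to the statement that $\chi(1)_p < |G|_p$ for every prime $p$ dividing $|G|$, since this says exactly that $p$ survives in the quotient $|G|/\chi(1)$. If $\chi$ is linear the claim is immediate, as then $\cod(\chi) = |G|$; so I may assume $\chi$ is nonlinear and invoke Lemma \ref{sp}.

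By Lemma \ref{sp}(1), primitivity gives a factorization $\chi = \prod_{q} \chi_q$, where each $\chi_q$ is $q$-special and $q$ ranges over $\pi(\chi(1))$. Because a $q$-special character has $q$-power degree, the factors have pairwise coprime degrees and $\chi(1) = \prod_{q} \chi_q(1)$; consequently $\chi(1)_p = \chi_p(1)$ whenever $p \in \pi(\chi(1))$, while $\chi(1)_p = 1$ for every other prime $p$. This isolates, for each relevant prime, the single special factor carrying the entire $p$-part of the degree.

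It then remains to bound $\chi(1)_p$ for each prime $p$ dividing $|G|$. Fix such a $p$ and let $P \in \Syl_p(G)$, so that $P \neq 1$ and $|P| = |G|_p$. If $p \notin \pi(\chi(1))$, then $\chi(1)_p = 1 < |P|$ and we are done. Otherwise $\chi_p$ is $p$-special, and Lemma \ref{sp}(2) shows that its restriction $(\chi_p)_P$ is irreducible; hence $\chi(1)_p = \chi_p(1)$ is the degree of an irreducible character of the nontrivial $p$-group $P$. The main point, and the step I expect to require the most care, is the strict inequality $\chi_p(1) < |P|$: a degree equal to $|P|$ would force $|P|^2 \leq |P|$ through the sum-of-squares identity $\sum_{\psi \in \Irr(P)} \psi(1)^2 = |P|$, which is impossible since $|P| \geq p > 1$. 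Thus $\chi(1)_p < |G|_p$ in every case, so every prime divisor of $|G|$ divides $\cod(\chi)$, and equality of the two prime sets follows because $\cod(\chi)$ divides $|G|$.
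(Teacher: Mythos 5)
Your proof is correct, and it shares the paper's skeleton: the same factorization $\chi=\prod_q \chi_q$ into special characters via Lemma \ref{sp}(1), and the same use of Lemma \ref{sp}(2) to restrict the $p$-special factor irreducibly to a Sylow $p$-subgroup $P$. Where you genuinely diverge is the finishing step. The paper supposes $p\nmid\cod(\chi)$, concludes that $\chi_p$ has $p$-defect zero, and invokes the block-theoretic fact that defect-zero characters vanish on all nontrivial $p$-elements (Theorem 8.17 of \cite{I76}); the contradiction with irreducibility of the restriction to $P$ is then left implicit (``this cannot be the case''). You bypass defect zero and vanishing entirely: since $(\chi_p)_P\in\Irr(P)$, the identity $\sum_{\psi\in\Irr(P)}\psi(1)^2=|P|$ already bounds the degree, giving $\chi_p(1)<|P|=|G|_p$ directly, so every prime divisor of $|G|$ survives in $|G|/\chi(1)$. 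This is more elementary and self-contained, and it in effect makes explicit the numerical contradiction the paper only gestures at. One small refinement: your argument as written only excludes $\chi_p(1)=|P|$, whereas you need $\chi_p(1)<|P|$; but the same identity gives $\chi_p(1)^2\le |P|$, hence $\chi_p(1)\le |P|^{1/2}<|P|$ since $P\ne 1$, which disposes of all cases at once and is even shorter than the case you singled out.
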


\begin{proof}
If $\chi(1)=1$, then $\cod(\chi)=|G|$ by definition, and there is noting to prove.
So we may assume that $\chi(1)>1$.
By Lemma \ref{sp}(1), we can write $\chi=\prod \chi_p$, where $p$ runs over $\pi(\chi(1))$
and $\chi_p$ is $p$-special.
Note that $\cod(\chi)=|G|/\chi(1)$,
and to complete the proof, therefore, it suffices to show that $\cod(\chi)$ can be divisible by every prime in $\pi(\chi(1))$.

Suppose that $p$ does not divide $\cod(\chi)$ for some $p\in\pi(\chi(1))$.
Then $|G|/\chi_p(1)$ is not divisible by $p$, so $\chi_p$ has $p$-defect zero.
In this case, it is well known that $\chi_p$ vanishes all nontrivial elements of a Sylow $p$-subgroup $P$ of $G$ (see Theorem 8.17 of \cite{I76} for example),
but then, by Lemma \ref{sp}(2), we see that the restriction $\chi_P$ is irreducible.
This cannot be the case, thus proving the theorem.
\end{proof}

\begin{thm}\label{cod}
Let $G$ be a solvable but non-nilpotent group, and assume that the following conditions hold.

{\rm (1)} $Z(G)$ is a cyclic group.

{\rm (2)} $F(G)/Z(G)$ is a chief factor of $G$.

{\rm (3)} $|F(G)|$ and $|G/F(G)|$ are relatively prime.\\
Then there is a faithful primitive character $\chi\in\Irr(G)$  such that $$\cod(\chi)=|G|/|F(G):Z(G)|^{\frac{1}{2}}.$$
If in addition $G/F(G)$ has no element of order divisible by all prime divisors of $|G/F(G)|$,
then $\pi(\cod(\chi))$ is not contained in $\pi(o(x))$ for every $x\in G$.
\end{thm}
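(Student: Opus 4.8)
The plan is to realize $\chi$ as a fully ramified character of the Fitting subgroup $F=F(G)$ lying over the centre $Z=Z(G)$, extended across the coprime quotient $G/F$, and then to deduce primitivity from the irreducibility of the chief factor $F/Z$. First I would record the relevant structure. As $F/Z$ is a chief factor of a solvable group it is elementary abelian of order $q^{m}$ for a single prime $q$, and $[F,F]\le Z$, so $F$ has nilpotency class at most two. The chief-factor hypothesis forces either $Z(F)=Z$ or $F$ abelian; since an abelian $F$ cannot carry a primitive constituent of degree $|F:Z|^{1/2}>1$, I may assume $Z(F)=Z$. Choosing a faithful linear $\lambda\in\Irr(Z)$ (available since $Z$ is cyclic), the pairing $(xZ,yZ)\mapsto\lambda([x,y])$ is a well-defined alternating bilinear form on $F/Z$ whose radical is $Z(F)/Z=1$; nondegeneracy then shows $m=2n$ is even and that $\lambda$ is fully ramified with respect to $F/Z$. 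Thus there is a unique $\theta\in\Irr(F)$ over $\lambda$, with $\theta(1)=|F:Z|^{1/2}=q^{n}$; since $Z$ is central $\lambda$ is $G$-invariant, and uniqueness makes $\theta$ $G$-invariant, while $\theta$ is faithful because $\Ker\theta\cap Z=\Ker\lambda=1$ and every nontrivial normal subgroup of the nilpotent group $F$ meets $Z(F)=Z$.

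Next I would extend and verify the codegree. Since $\gcd(|F|,|G/F|)=1$ and $\theta$ is $G$-invariant, a standard coprime extension theorem (see \cite{I76}) yields $\chi\in\Irr(G)$ with $\chi_{F}=\theta$, so $\chi(1)=q^{n}$. This $\chi$ is faithful: every minimal normal subgroup of $G$ lies in $Z$ (it is contained in $F$ and, not being able to cover the chief factor $F/Z$ without making $F$ abelian, must lie in $Z$), so $\Ker\chi\neq1$ would give $\Ker\chi\cap Z\neq1$, against $\Ker\chi\cap Z=\Ker\theta\cap Z=1$. Hence $\cod(\chi)=|G|/\chi(1)=|G|/|F:Z|^{1/2}$.

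The hard part will be primitivity, and the delicate point is that $\theta$ itself is usually imprimitive in $F$, so primitivity of $\chi$ must be extracted globally. Assuming $\chi$ imprimitive, a brief Frobenius-reciprocity argument reduces to $\chi=\xi^{G}$ with $M<G$ maximal and $\xi\in\Irr(M)$, and nonvanishing of $\chi$ on $Z$ forces $Z\le M$. If $F\le M$ then $\xi_{F}$ is a multiple of $\theta$, and restricting $\chi=\xi^{G}$ to $F$ by Mackey gives $\theta=[G:M]\tfrac{\xi(1)}{\theta(1)}\theta$, so $[G:M]=1$, absurd. If $F\not\le M$ then $FM=G$; by Mackey $\theta=(\xi_{W})^{F}$ with $W=F\cap M$, a degree comparison excludes $W=Z$, and so $Z<W<F$, i.e.\ $W/Z$ is a proper nonzero subspace of $F/Z$. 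Now $[G:M]=[F:W]$ is a power of $q$, hence $M$ contains a Hall $q'$-subgroup $K$ of $G$; as $|G/F|$ is prime to $q$ we get $KF=G$, so the $K$-invariant subspaces of $F/Z$ coincide with the $G$-invariant ones. Since $W=M\cap F$ is $K$-invariant, $W/Z$ would be a proper nonzero $G$-invariant subspace of the chief factor $F/Z$ --- a contradiction. Therefore $\chi$ is primitive. It is exactly here that coprimeness is essential: it is what forces a full $q'$-complement inside $M$ and upgrades $M$-invariance to $G$-invariance.

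Finally, for the added assertion I would invoke Theorem \ref{pc}: as $\chi$ is faithful and primitive, $\pi(\cod(\chi))=\pi(|G|)=\pi(|F|)\cup\pi(|G/F|)$, a disjoint union by (3). If some $x\in G$ satisfied $\pi(\cod(\chi))\subseteq\pi(o(x))$, then $\pi(o(x))\supseteq\pi(|G/F|)$; taking $y$ to be the $\pi(|G/F|)$-part of $x$, coprimeness gives $\langle y\rangle\cap F=1$, so $y$ injects into $G/F$ and $yF$ has order divisible by every prime of $|G/F|$, contrary to the hypothesis on $G/F$. Hence no such $x$ exists, and $\pi(\cod(\chi))\not\subseteq\pi(o(x))$ for every $x\in G$.
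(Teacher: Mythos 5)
Your overall strategy is the same as the paper's: fix a faithful linear $\lambda\in\Irr(Z(G))$, show the character $\theta\in\Irr(F(G))$ over $\lambda$ is faithful and fully ramified with respect to $F(G)/Z(G)$, extend $\theta$ to $G$ using the fact that $F(G)$ is a normal Hall subgroup, prove every extension is faithful and primitive, and finish via Theorem \ref{pc}. Your execution differs only in local details, all of which are legitimate: you prove full ramification directly from the nondegenerate commutator form $(xZ,yZ)\mapsto\lambda([x,y])$ where the paper cites Theorem 2.31 of Isaacs; you get faithfulness from ``every minimal normal subgroup lies in $Z(G)$'' where the paper uses that $F(G/Z(G))$ is self-centralizing; your final paragraph spells out a reduction mod $F(G)$ that the paper declares ``clear.'' Your primitivity argument (reduce to a maximal $M$, apply Mackey, then use a Hall $q'$-subgroup of $M$ to upgrade $M$-invariance of $W/Z$ to $G$-invariance) is correct but more roundabout than the paper's: the paper observes that $D=F(G)\cap U$ contains $Z(G)$, is normal in $G$ (because $F(G)/Z(G)$ is abelian and $G=F(G)U$), and hence equals $Z(G)$ or $F(G)$ by condition (2), the first case being impossible by full ramification; no Hall subgroup is needed.

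The genuine problem is the sentence ``since an abelian $F$ cannot carry a primitive constituent of degree $|F:Z|^{1/2}>1$, I may assume $Z(F)=Z$.'' This is circular: you discard the abelian case on the grounds that the theorem's conclusion cannot hold there, which is an argument that the theorem \emph{fails} in that case, not that the case does not occur. And it does occur under hypotheses (1)--(3): take $G=S_3$, where $Z(G)=1$ is cyclic, $F(G)=A_3$ makes $F(G)/Z(G)$ a chief factor, and $|F(G)|=3$ is coprime to $|G/F(G)|=2$; here $F(G)$ is abelian, $S_3$ has no faithful primitive character (its unique faithful irreducible character is induced from $A_3$), and the asserted codegree $6/\sqrt{3}$ is not even an integer. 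So your argument proves the theorem only under the additional hypothesis that $F(G)$ is nonabelian, equivalently $Z(F(G))=Z(G)$. You should know that the paper's own proof has exactly the same gap, hidden in the sentence ``so $Z(G)=Z(F(G))$ by Condition (2)'': condition (2) only gives $Z(F(G))/Z(G)\in\{1,\,F(G)/Z(G)\}$, and the second possibility is silently ignored. In the paper's application $F(G)$ is extraspecial, hence nonabelian, so the counterexample to Moret\'o's question is unaffected; but as stated, Theorem \ref{cod} needs the extra hypothesis, and so does your proof. Noticing the dichotomy, as you did, is the right instinct --- the correct fix is to add ``$F(G)$ nonabelian'' (or ``$Z(F(G))=Z(G)$'') to the hypotheses, not to argue the case away.
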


\begin{proof}
We will complete the proof by carrying out the following steps.

\smallskip
\emph{Step 1. We fix a faithful linear character $\lambda$ of $Z(G)$ and let $\theta\in\Irr(F(G))$ lie over $\lambda$. Then $\theta$ is faithful and fully ramified with respect to $F(G)/Z(G)$,
so that $\theta(1)^2=|F(G):Z(G)|$.}

The existence of such faithful linear character $\lambda$ of $Z(G)$ is guaranteed by Condition (1).
Note that $Z(G)\le Z(F(G))$ and $Z(F(G))/Z(G)\NM G/Z(G)$, so $Z(G)=Z(F(G))$ by Condition (2).
For any $\theta\in\Irr(F(G))$ lying over $\lambda$,
since $\theta_{Z(G)}=\theta(1)\lambda$, we see that $\Ker\theta\cap Z(G)=\Ker\lambda=1$.
But now $Z(G)$ is the center of the nilpotent group $F(G)$,
which forces $\Ker\theta=1$, so $\theta$ is faithful.
By Theorem 2.31 of \cite{I76}, it follows that $\theta$ is fully ramified with respect to $F(G)/Z(G)$,
and thus $\theta(1)^2=|F(G):Z(G)|$.

\smallskip
\emph{Step 2. The character $\theta$ is extendible to $G$, and every extension $\chi\in\Irr(G)$ of $\theta$ is primitive.}

By Step 1, we know that $\theta$ is the unique irreducible character of $F(G)$ that lies over $\lambda$,
so $\theta$ is necessarily $G$-invariant, and since $F(G)$ is a normal Hall subgroup of $G$ by Condition (3), it follows that $\theta$ is extendible to $G$ by Gallagher's extension theorem (see Theorem 8.15 of \cite{I76}). Let $\chi\in\Irr(G)$ be an extension of $\theta$ and suppose that $\chi=\alpha^G$, where $\alpha\in\Irr(U)$ for some subgroup $U$ of $G$.
Write $D=F(G)\cap U$. Then $Z(G)\le U$ (this is an elementary fact about characters; see Problem 5.12 of \cite{I76} or Lemma 10.38 of \cite{I18} for a proof), so $Z(G)\le D$.
Since $(\alpha^G)_{F(G)}=\chi_{F(G)}=\theta$, which is irreducible, we conclude that $G=F(G)U$ and hence $D\NM G$. Now by Condition (2), we have two possibilities: either $D=Z(G)$ or $D=F(G)$.
Assume that $D=Z(G)$. Then $(\alpha_{Z(G)})^{F(G)}=(\alpha^G)_{F(G)}=\theta$,
which forces $\alpha_{Z(G)}=\lambda$ and hence $\lambda^{F(G)}=\theta$. This is impossible by Step 1,
so $D=F(G)$. Now we have $U=G$, and thus $\chi$ is primitive.

\smallskip
\emph{Step 3. Each extension $\chi\in\Irr(G)$ of $\theta$ is faithful,
and thus $\cod(\chi)=|G|/|F(G):Z(G)|^{\frac{1}{2}}$.}

Let $K=\Ker\chi$. Since $\theta$ is faithful by Step 1,
we see that $K\cap F(G)=\Ker\theta=1$, and thus $K$ centralizes $F(G)$ and hence $F(G)/Z(G)$.
But $F(G)/Z(G)=F(G/Z(G))$, which is self-centralized in the solvable group $G/Z(G)$,
so $K\le F(G)$ and $K=1$. This proves that $\chi$ is faithful,
and by definition, we have
$$\cod(\chi)=|G:\Ker\chi|/\chi(1)=|G|/\theta(1)=|G|/|F(G):Z(G)|^{\frac{1}{2}}.$$

\smallskip
\emph{Step 4. Complete the proof.}

We have established that $G$ has a faithful primitive character $\chi$, so Theorem \ref{pc} applies,
and we obtain $\pi(\cod(\chi))=\pi(|G|)$.
Assume that $\pi(\cod(\chi))\subseteq\pi(o(g))$ for some element $g\in G$.
Then $\pi(o(g))=\pi(|G|)$.
Consider the quotient group $\bar G=G/F(G)$. It is clear that the image $\bar g$ of $g$ in $\bar G$
also has the same property that $\pi(o(\bar g))=\pi(|\bar G|)$.
This shows that the last statement of the theorem also holds, and the proof is now complete.
\end{proof}

As an application of Theorem ~\ref{cod}, we can now construct a counterexample to Moret\'o's question.
For the definition and properties of semi-linear groups $\Gamma(q^n)$, we refer to \cite{manz/wolf}.

Let $T$ be a subgroup of the semi-linear group $\Gamma(2^{10})$,
where $|\Gamma(2^{10})|=(2^{10}-1)\cdot 10$.
The normal cyclic subgroup of $T$ from the bottom $2^{10}-1$ part is of order $2^5+1=33=3 \cdot 11$,
and the Galois group of $T$ from the top $10$ part is of order $5$, so that $|T|=3 \cdot 5 \cdot 11$.
Since $T$ is not cyclic by a direct calculation,
it follows that $T$ has no element of order divisible by all three primes of $|T|$.

Our group $G$ is the semi-direct product $E\rtimes T$, where $E$ is an extra-special group of order $2^{11}$
such that $T$ acts faithfully on $E$ and irreducibly on $E/Z(E)$.
We point out that the group $G$ can be realized as a subgroup of $\GL(2^5,3)$, and it has been constructed explicitly in GAP ~\cite{GAP2020}.
It is easy to see that $F(G)=E$ and $G/F(G)\cong T$, so by Theorem ~\ref{cod},
we conclude that Moret\'o's question can have a negative answer.

Using GAP we can calculate the codegree set of the group $G$, which is \[\{1,3,5,11,15,33,1024,2112,5120,10560 \}.\]

Of course one can construct many other examples of the same flavor.

\bigskip

\noindent
\textbf{Acknowledgement} This work was supported by NSFC (Grant No. 12171289) and a grant from the Simons Foundation (No. 499532). 


\end{document}